\newtheorem{theorem}{Theorem}
\newtheorem{remark}{Remark}
\begin{document}

\title [ Tur\'an Type Inequalities for The  $q$-exponential functions]{Tur\'an Type Inequalities for The  $q$-exponential functions\\}%

\author[ Khaled Mehrez]{  Khaled Mehrez }
\address{Khaled Mehrez. D\'epartement de Math\'ematiques ISSATK, Kairouan 3000, Tunisia.}
 \email{k.mehrez@yahoo.fr}

\begin{abstract}
In this paper our aim is to deduce some sharp  Tur\'an type inequalities for the remainder $q-$exponential functions. Our results are shown to be a generalization of results which were obtained by Alzer \cite{al}.
\end{abstract}
\maketitle
{\it keywords:} $q-$analogue of exponential functions, Tur\'an type inequalities.\\
\textbf{ Mathematics Subject Classification (2010)}\;33B10, 39B62\\
\section{\textbf{Introduction}}
The inequalities of the type 
$$f_{n}(x)f_{n+2}(x)-f_{n+1}(x)^{2}\geq 0$$
have many applications in pure mathematics as in other branches of science. They
are named by Karlin and Szeg\"o \cite{sz}, Tur\'an-type inequalities because the first of these
type of inequalities was introduced in 1941 by P. Tur\'an \cite{t}.  More precisely, he used some results of Szeg\"o  \cite{sz1} to prove the previous inequality for $x\in(-1,1)$, where $f_n$ is the Legendre polynomial of degree $n$.  Actually, the Tur\'an type inequalities have a
more extensive literature and recently the results have been applied in problems arising from many fields such as information theory, economic theory and biophysics. 

Motivated by these applications, the Tur\'an type inequalities have recently come
under the spotlight once again and it has been shown that, for example, the classical
Gauss and Kummer hypergeometric functions, as well the generalized hypergeometric
functions, satisfy naturally some Tur\'an type inequalities \cite{ks, ks1, ks2}. For deep study about this subject we refer to \cite{al2,ba, ba1, kh}.

This paper is organized as follows: in Section 2 we present some preliminary results
and notations that will be useful in the sequel. In section 3,  we investigate some Tur\'an
type inequalities for the $q-$exponential functions.
\section{\textbf{Notations and preliminaries}}
Throughout this paper, we fix $q\in(0,1).$ We refer to \cite{gr} and \cite{ko} for the definitions, notations and properies of  the $q$-shifted factorials and the $q-$analogue of exponential functions. 
\subsection{Basic symbols}

Let $a\in\mathbb{R}$, the $q$-Schifted factorials are defined by
\[
(a;q)_{0}=1,\,\,\,\,\,(a;q)_{n}=\prod_{k=0}^{n-1}(1-aq^{k}),\,\,\,(a;q)_{\infty}=\prod_{k=0}^{\infty}(1-aq^{k}),\]
and we write
$$(a_{1},a_{2},...,a_{p};q)=(a_{1};q)_{n}(a_{1};q)_{n}...(a_{p};q)_{n},\;n=0,1,2,...$$
Note that for $q\longrightarrow1$ the expression $\frac{(q^{a};q)_{n}}{(1-q)^{n}}$ tend to $(a)_{n}=a(a+1)...(a+n-1).$
\subsection{$q$-analogue of exponential functions } 
For $q\in(0,1)$ the $q-$analogue of exponential function are given by \cite{gr,ko}
\begin{equation}
e(q;z)=\sum_{n=0}^{\infty}\frac{z^{n}}{(q,q)_{n}}=\frac{1}{(z;q)_{\infty}},\,\,|z|<1.
\end{equation}
and 
\begin{equation}
E(q;z)=\sum_{n=0}^{\infty}q^\frac{n(n-1)}{2}\frac{z^{n}}{(q,q)_{n}}=(-z;q)_{\infty}=\prod_{k=0}^{\infty}\left(1+zq^{k}\right),\;\;z\in\mathbb{C}.
\end{equation}
We denote by $I_n(q;z)$ and $\mathcal{I}_n(q;z)$ the differences 
\begin{equation}
I_n(q;z)=e(q;z)-\sum_{n=0}^{n}\frac{z^{n}}{(q,q)_{n}},\;0<z<1
\end{equation}
and 
\begin{equation}
\mathcal{I}_n(q;z)=e(q;z)-\sum_{n=0}^{n}q^\frac{n(n-1)}{2}\frac{z^{n}}{(q,q)_{n}},\;z>0.
\end{equation}
where $n$ is a nonnegative integer.

\section{\textbf{Tur\'an types inequalities for $q-$analogue of exponential functions}}
\begin{theorem}\label{T1} For every $n\in\mathbb{N},\;q\in(0,1)$  and $0<z<1$, The following Tur\'an type inequalities
\begin{equation}\label{1}
\frac{1-q^{n+1}}{1-q^{n+2}}\left(I_{n}(q;z)\right)^{2}<I_{n-1}(q;z)I_{n+1}(q;z)<\left(I_{n}(q;z)\right)^{2},
\end{equation}
\end{theorem}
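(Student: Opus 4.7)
The plan is to factor off the leading power of $z$ from the tail $I_n(q;z)$, reducing the double inequality to a cleaner statement about an auxiliary function that satisfies a simple first-order $q$-recurrence. Using $(q;q)_k=(q;q)_{n+1}(q^{n+2};q)_{k-n-1}$ inside the series defining $I_n$, I would write
\[
I_n(q;z)\;=\;\frac{z^{n+1}}{(q;q)_{n+1}}\,F_n(z),\qquad F_n(z)\;:=\;\sum_{j=0}^{\infty}\frac{z^{j}}{(q^{n+2};q)_{j}}.
\]
A direct computation then gives
\[
\frac{I_{n-1}(q;z)\,I_{n+1}(q;z)}{I_n(q;z)^{2}}\;=\;\frac{1-q^{n+1}}{1-q^{n+2}}\cdot\frac{F_{n-1}(z)\,F_{n+1}(z)}{F_n(z)^{2}},
\]
so that the stated Tur\'an inequalities are equivalent to the double estimate
\[
1\;<\;\frac{F_{n-1}(z)\,F_{n+1}(z)}{F_n(z)^{2}}\;<\;\frac{1-q^{n+2}}{1-q^{n+1}}.
\]

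For the left inequality (log-convexity of $F_n$ in $n$), I plan a short Cauchy--Schwarz argument. Setting $c_{n,j}:=1/(q^{n+2};q)_{j}$, the elementary identity
\[
(1-a)^{2}-(1-aq^{-1})(1-aq)\;=\;a\bigl(q+q^{-1}-2\bigr)\;>\;0,\qquad 0<q<1,\;a>0,
\]
applied factor by factor with $a=q^{n+2+i}$, gives $c_{n,j}^{2}<c_{n-1,j}\,c_{n+1,j}$ for every $j\ge 1$. Applying Cauchy--Schwarz to the pair of sequences $\bigl(\sqrt{c_{n-1,j}}\,z^{j/2}\bigr)$ and $\bigl(\sqrt{c_{n+1,j}}\,z^{j/2}\bigr)$, combined with the pointwise bound $c_{n,j}\le\sqrt{c_{n-1,j}\,c_{n+1,j}}$, yields $F_n(z)^{2}\le F_{n-1}(z)\,F_{n+1}(z)$, with strictness inherited from the strict pointwise bound at $j\ge 1$.

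For the right inequality I would use the recurrence obtained by splitting off the $j=0$ term of each series:
\[
F_{n-1}(z)\;=\;1+\frac{z}{1-q^{n+1}}\,F_n(z),\qquad F_n(z)\;=\;1+\frac{z}{1-q^{n+2}}\,F_{n+1}(z).
\]
Substituting the first relation and the second (solved for $F_{n+1}$) into $\frac{1-q^{n+2}}{1-q^{n+1}}F_n^{2}-F_{n-1}F_{n+1}$, the quadratic-in-$F_n$ terms cancel, leaving the compact identity
\[
\frac{1-q^{n+2}}{1-q^{n+1}}\,F_n(z)^{2}-F_{n-1}(z)\,F_{n+1}(z)\;=\;\frac{(1-q^{n+2})\bigl(F_{n-1}(z)-F_n(z)\bigr)}{z}.
\]
Since each coefficient $1/(q^{n+2};q)_{j}$ is strictly decreasing in $n$, so is $F_n(z)$, and the right-hand side is positive. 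The main obstacle I foresee is spotting this cancellation cleanly; once the identity is in hand, both halves of the theorem follow mechanically, bypassing any hypergeometric machinery.
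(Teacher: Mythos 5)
Your argument is correct, and it takes a genuinely different route from the paper. The paper works directly with the tails $I_n$: for the right-hand inequality it expands $I_{n-1}I_{n+1}-I_n^2$ via the recurrences $I_{n-1}=I_n+z^n/(q;q)_n$, $I_{n+1}=I_n-z^{n+1}/(q;q)_{n+1}$ and exhibits every series coefficient as $\frac{q^k-q^{n+1}}{(q;q)_{n+1}(q;q)_k}<0$; for the left-hand inequality it expands $I_n^2$ as a Cauchy product and runs a chain of coefficientwise estimates using $\sum_{j=0}^{k}\frac{1}{(q;q)_{n+1+j}(q;q)_{n+1+k-j}}\geq\frac{k+1}{(q;q)_{n+1}(q;q)_{n+k+1}}$ together with $\frac{1-q^{k+1}}{1-q}\leq k+1$. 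Your normalization $I_n=\frac{z^{n+1}}{(q;q)_{n+1}}F_n$ is the key structural difference: it makes the sharp constant $\frac{1-q^{n+1}}{1-q^{n+2}}$ fall out of the prefactors, reduces the theorem to $1<F_{n-1}F_{n+1}/F_n^2<\frac{1-q^{n+2}}{1-q^{n+1}}$, and also makes the optimality claim immediate since $F_n(0)=1$ (a point worth adding explicitly, as the paper does verify sharpness via the $z\to 0$ limit). I checked the pieces: the factorization $(q;q)_{n+1+j}=(q;q)_{n+1}(q^{n+2};q)_j$ is right; the elementary identity $(1-a)^2-(1-aq^{-1})(1-aq)=a(q+q^{-1}-2)>0$ does give $c_{n,j}^2<c_{n-1,j}c_{n+1,j}$ for $j\geq1$, and Cauchy--Schwarz then yields strict log-convexity of $n\mapsto F_n(z)$ for $0<z<1$; the recurrences $F_{n-1}=1+\frac{z}{1-q^{n+1}}F_n$ and $F_n=1+\frac{z}{1-q^{n+2}}F_{n+1}$ are correct, and substituting them does produce the identity $\frac{1-q^{n+2}}{1-q^{n+1}}F_n^2-F_{n-1}F_{n+1}=\frac{(1-q^{n+2})(F_{n-1}-F_n)}{z}$, whose positivity follows from the monotonicity of $F_n$ in $n$. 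What your approach buys is a proof with essentially no estimation in the upper bound (a pure algebraic cancellation) and a standard, reusable Cauchy--Schwarz mechanism in the lower bound, at the cost of introducing the auxiliary family $F_n$; the paper's proof is more computational but stays entirely at the level of the original series. One small housekeeping point: make sure the Cauchy--Schwarz step is stated for the convergent positive series on $0<z<1$ (where $(q^{n+2};q)_j$ is bounded below by $(q^{n+2};q)_\infty>0$), which is exactly the range in the theorem.
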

hlods, where $\frac{1-q^{n+1}}{1-q^{n+2}}$ is the best possible constant. 
\begin{proof} Let $n\in\mathbb{N}$ and $q\in(0,1)$ we have 
\begin{equation}\label{01}
I_{n-1}(q;z)=I_{n}(q;z)+\frac{z^n}{(q,q)_n}
\end{equation}
and
\begin{equation}\label{02}
I_{n+1}(q;z)=I_{n}(q;z)-\frac{z^{n+1}}{(q,q)_n}.
\end{equation}
Thus 
\begin{equation}
\begin{split}
I_{n-1}(q;z)I_{n+1}(q;z)-I_{n}^{2}(q;z)&=I_{n}(q;z)\left(\frac{z^{n}}{(q,q)_{n}}-\frac{z^{n+1}}{(q,q)_{n+1}}\right)-\frac{z^{2n+1}}{(q,q)_{n}(q,q)_{n+1}}\\
&=\frac{z^{n}}{(q,q)_{n}}\left(\frac{z^{n+1}}{(q,q)_{n+1}}+\sum_{k=n+2}^{\infty}\frac{z^{k}}{(q,q)_{k}}\right)-\frac{z^{n+1}}{(q,q)_{n+1}}I_{n}(z,q)-\frac{z^{2n+1}}{(q,q)_{n}(q,q)_{n+1}}\\
&=\sum_{k=n+2}^{\infty}\frac{z^{k+n}}{(q,q)_{n}(q,q)_{k}}-\sum_{k=n+1}^{\infty}\frac{z^{k+n+1}}{(q,q)_{k}(q,q)_{n+1}}\\
&=\sum_{k=n+2}^{\infty}\frac{z^{k+n}}{(q,q)_{n}(q,q)_{k}}-\sum_{k=n+2}^{\infty}\frac{z^{k+n}}{(q,q)_{k-1}(q,q)_{n+1}}\\
&=\sum_{k=n+2}^{\infty}\left[\frac{(1-q^{n+1})-(1-q^{k})}{(q,q)_{n+1}(q,q)_{k}}\right]z^{k+n}\\
&=\sum_{k=n+2}^{\infty}\left[\frac{q^{k}-q^{n+1}}{(q,q)_{n+1}(q,q)_{k}}\right]z^{k+n}<0,
\end{split}
\end{equation}
from which follows the right hand  side inequality of (\ref{1}) for $0<z<1$ and $q\in(0,1).$\\
Now we prove the left hand side of (\ref{1}). From (\ref{01}) and (\ref{02}) we get the inequality 
\begin{equation}
\left[I_{n}(q;z)+\frac{z^{n}}{(q,q)_{n}}\right]\left[I_{n}(q;z)-\frac{z^{n+1}}{(q,q)_{n+1}}\right]>\frac{1-q^{n+1}}{1-q^{n+2}}\left(I_{n}(q;z)\right)^{2}
\end{equation}
which is equivalent to 
\begin{equation}
\frac{q^{n+1}-q^{n+2}}{1-q^{n+2}}\left(I_{n}(q;z)\right)^{2}>\frac{z^{2n+1}}{(q,q)_{n}(q,q)_{n+1}}+I_{n}(q;z)\left(\frac{z^{n+1}}{(q,q)_{n+1}}-\frac{z^{n}}{(q,q)_{n}}\right)
\end{equation}
On the other hand, we get 
\begin{equation}\label{001}
\begin{split}
\frac{q^{n+1}-q^{n+2}}{1-q^{n+2}}\left(I_{n}(q;z)\right)^{2}&=\left(\frac{q^{n+1}-q^{n+2}}{1-q^{n+2}}\right)\sum_{k=0}^{\infty}z^{2n+2+k}\left[\sum_{j=0}^{k}\frac{1}{(q,q)_{n+1+j}(q,q)_{n+1+k-j}}\right]\\
&>\left(\frac{q^{n+1}-q^{n+2}}{1-q^{n+2}}\right)\sum_{k=0}^{\infty}\frac{k+1}{(q,q)_{n+1}(q,q)_{n+k+1}}z^{2n+2+k}\\
&>(q^{n+1}-q^{n+2})\sum_{k=0}^{\infty}\frac{k+1}{(q,q)_{n+1}[(1-q^{n+2+k})(q,q)_{n+k+1}]}z^{2n+2+k}\\
&=(q^{n+1}-q^{n+2})\sum_{k=0}^{\infty}\frac{k+1}{(q,q)_{n+1}(q,q)_{n+k+2}}z^{2n+2+k}.
\end{split}
\end{equation}
Using the inequality 
\begin{equation}\label{w1}
\frac{q^{n+1}-q^{n+k+2}}{q^{n+1}-q^{n+2}}=\frac{1-q^{k+1}}{1-q}\leq k+1, \textrm{for\;all}\;k\in\mathbb{N} \textrm{and} q\in(0,1)
\end{equation}
and (\ref{001}) we obtain
\begin{equation} \label{k0}
\frac{q^{n+1}-q^{n+2}}{1-q^{n+2}}\left(I_{n}(q;z)\right)^{2}>\sum_{k=0}^{\infty}\frac{\left[q^{n+1}-q^{n+k+2}\right]}{(q,q)_{n+1}(q,q)_{n+k+2}}z^{2n+2+k}.
\end{equation}
Also, Using the inequalities 
\begin{equation}
\frac{z^{n+1}}{(q,q)_{n+1}}I_{n}(q;z)=\sum_{k=0}^{\infty}\frac{z^{2n+2+k}}{(q,q)_{n+1}(q,q)_{k+n+1}}
\end{equation}
and 
\begin{equation}
\begin{split}
\frac{z^{n}}{(q,q)_{n}}I_{n}(q;z)&=\sum_{k=0}^{\infty}\frac{z^{2n+1+k}}{(q,q)_{n}(q,q)_{k+n+1}}\\
&=\frac{z^{2n+1}}{(q,q)_{n}(q,q)_{n+1}}+\sum_{k=0}^{\infty}\frac{z^{2n+2+k}}{(q,q)_{n}(q,q)_{k+n+2}}
\end{split}
\end{equation}
we obtain
\begin{equation}\label{k1}
\begin{split}
I_{n}(q;z)\left(\frac{z^{n+1}}{(q,q)_{n+1}}-\frac{z^{n}}{(q,q)_{n}}\right)+\frac{z^{2n+1}}{(q,q)_{n}(q,q)_{n+1}}&=\sum_{k=0}^{\infty}\left[\frac{1}{(q,q)_{n+1}(q,q)_{n+k+1}}-\frac{1}{(q,q)_{n}(q,q)_{n+k+2}}\right]z^{2n+2+k}\\
&=\sum_{k=0}^{\infty}\left[\frac{(1-q^{n+k+2})-(1-q^{n+1})}{(q,q)_{n+1}(q,q)_{n+k+2}}\right]z^{2n+2+k}\\
&=\sum_{k=0}^{\infty}\left[\frac{q^{n+1}-q^{n+k+2}}{(q,q)_{n+1}(q,q)_{n+k+2}}\right]z^{2n+2+k}.
\end{split}
\end{equation}
Combining (\ref{k0}) and (\ref{k1}) we get the left hand side inequality of \ref{1}.
Finally, since 
\[\lim_{z\longrightarrow0}\frac{I_{n-1}(q;z)I_{n+1}(q;z)}{\left(I_{n}(z;q)\right)^{2}}=\frac{1-q^{n+1}}{1-q^{n+2}}\]
we conclude that in inequality (\ref{1}) the value $\frac{1-q^{n+1}}{1-q^{n+2}}$ is the best possible constant.\\
So the proof of Theorem \ref{T1} is complete.
\end{proof}
\begin{theorem}\label{T2} For all $q\in(0,1)$ and for every $n\in\mathbb{N}$ the following Tur\'an type inequalities
\begin{equation}\label{10}
\left[\frac{q-q^{n+2}}{1-q^{n+2}}\right]\left(\mathcal{I}_{n}(q;z)\right)^{2}\leq\mathcal{I}_{n-1}(q;z)\mathcal{I}_{n+1}(q;z)<\left(\mathcal{I}_{n}(q;z)\right)^{2},
\end{equation}
hold for all $z>0.$ The value $\left[\frac{q-q^{n+2}}{1-q^{n+2}}\right]$ is the best possible constant.
\end{theorem}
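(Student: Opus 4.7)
My plan is to mirror the proof of Theorem~\ref{T1}, carefully tracking the additional quadratic $q$-factors $q^{k(k-1)/2}$ that distinguish $E(q;z)$ from $e(q;z)$. Writing $a_k:=q^{k(k-1)/2}z^{k}/(q,q)_k$, the series definition of $\mathcal{I}_n$ immediately yields the analogues of (\ref{01})--(\ref{02}),
\[
\mathcal{I}_{n-1}(q;z)=\mathcal{I}_n(q;z)+a_n,\qquad \mathcal{I}_{n+1}(q;z)=\mathcal{I}_n(q;z)-a_{n+1},
\]
from which $\mathcal{I}_{n-1}\mathcal{I}_{n+1}-\mathcal{I}_n^{2}=(a_n-a_{n+1})\mathcal{I}_n-a_n a_{n+1}$.

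For the right inequality, I would expand $a_n\mathcal{I}_n$ and $a_{n+1}\mathcal{I}_n$ termwise, peel off the $k=n+1$ term from $a_n\mathcal{I}_n$, and reindex the second series by $k\mapsto k-1$ so that both range over $k\geq n+2$ with matching $z$-exponent $z^{n+k}$. Bringing each summand to the common denominator $(q,q)_{n+1}(q,q)_k$ and factoring out the smaller $q$-power $q^{(n+1)n/2+(k-1)(k-2)/2}$ yields
\[
\mathcal{I}_{n-1}\mathcal{I}_{n+1}-\mathcal{I}_n^{2}=\sum_{k=n+2}^{\infty}\frac{q^{(n+1)n/2+(k-1)(k-2)/2}\,(q^{k-n-1}-1)}{(q,q)_{n+1}(q,q)_k}\,z^{n+k},
\]
which is strictly negative since $q^{k-n-1}<1$ for $k\geq n+2$.

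For the left inequality, substituting the recurrences and using $1-\tfrac{q-q^{n+2}}{1-q^{n+2}}=\tfrac{1-q}{1-q^{n+2}}$ reduces the claim to
\[
\frac{1-q}{1-q^{n+2}}\,\mathcal{I}_n^{2}>a_n a_{n+1}+(a_{n+1}-a_n)\,\mathcal{I}_n.
\]
The right-hand side telescopes, in the spirit of (\ref{k1}), into a single power series in $z$. For the left-hand side I would expand $\mathcal{I}_n^{2}$ as a Cauchy product; its coefficient at $z^{2n+2+\ell}$ is $\sum_{j=0}^{\ell}q^{E_\ell(j)}/[(q,q)_{n+1+j}(q,q)_{n+1+\ell-j}]$ with $E_\ell(j)=(n+1+j)(n+j)/2+(n+1+\ell-j)(n+\ell-j)/2$. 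A short computation gives $E_\ell(j)-E_\ell(0)=j(j-\ell)\leq 0$, so $q^{E_\ell(j)}\geq q^{E_\ell(0)}$; simultaneously, log-concavity of $m\mapsto 1/(q,q)_m$ yields $1/[(q,q)_{n+1+j}(q,q)_{n+1+\ell-j}]\geq 1/[(q,q)_{n+1}(q,q)_{n+1+\ell}]$. These two cooperating bounds produce the uniform estimate $(\ell+1)\,q^{E_\ell(0)}/[(q,q)_{n+1}(q,q)_{n+1+\ell}]$ for the Cauchy-product coefficient, and the two scalar inequalities $1-q^{n+2+\ell}\geq 1-q^{n+2}$ and $(1-q^{\ell+1})/(1-q)\leq\ell+1$ (the latter already exploited in (\ref{w1})) then permit a termwise comparison in the style of (\ref{k0}).

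Sharpness is immediate from $\mathcal{I}_m(q;z)\sim a_{m+1}$ as $z\to 0^{+}$, which yields
\[
\lim_{z\to 0^{+}}\frac{\mathcal{I}_{n-1}(q;z)\,\mathcal{I}_{n+1}(q;z)}{\mathcal{I}_n(q;z)^{2}}=\frac{a_n a_{n+2}}{a_{n+1}^{2}}=q\cdot\frac{1-q^{n+1}}{1-q^{n+2}}=\frac{q-q^{n+2}}{1-q^{n+2}}.
\]
The main obstacle will be the lower-bounding step for $\mathcal{I}_n^{2}$: although the extra $q$-exponent $E_\ell(j)$ and the log-concavity of $1/(q,q)_m$ both push the Cauchy-product coefficients to their minima at the boundary $j\in\{0,\ell\}$ (so the two effects cooperate rather than cancel), one must factor them in the correct order to check that the $q$-binomial estimate $(1-q^{\ell+1})/(1-q)\leq\ell+1$ still closes the gap once the extra quadratic $q$-factors have been absorbed into the boundary coefficient.
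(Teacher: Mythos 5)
Your proposal is correct and follows essentially the same route as the paper's own proof: the same recurrences for $\mathcal{I}_{n\pm1}$, the same reindexed series giving negativity of $\mathcal{I}_{n-1}\mathcal{I}_{n+1}-\mathcal{I}_n^2$, the same reduction of the left inequality to $\tfrac{1-q}{1-q^{n+2}}\mathcal{I}_n^2\geq a_na_{n+1}+(a_{n+1}-a_n)\mathcal{I}_n$, the same Cauchy-product lower bound via the boundary estimates $q^{E_\ell(j)}\geq q^{E_\ell(0)}$ and $(q;q)_{n+1+j}(q;q)_{n+1+\ell-j}\leq(q;q)_{n+1}(q;q)_{n+1+\ell}$ combined with (\ref{w1}), and the same limiting argument for sharpness. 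The concern you raise at the end is unfounded: the factor $q^{E_\ell(0)}=q^{n(n+1)/2+(n+\ell)(n+\ell+1)/2}$ appears identically on both sides of the final termwise comparison, so the argument closes exactly as in Theorem \ref{T1}.
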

\begin{proof} Let $q\in(0,1),\;n\in\mathbb{N}$ and $z>0$, using the inequalities
\begin{equation}\label{kh1}
\mathcal{I}_{n-1}(q;z)=\mathcal{I}_{n}(q;z)+\frac{q^{\frac{n(n-1)}{2}}}{(q;q)_{n}}z^{n}
\end{equation}
and
\begin{equation}\label{kh2}
\mathcal{I}_{n+1}(q;z)=\mathcal{I}_{n}(q;z)-\frac{q^{\frac{n(n+1)}{2}}}{(q;q)_{n}}z^{n}
\end{equation}
we obtain
\begin{equation}\label{11}
\begin{split}
\mathcal{I}_{n+1}(q;z)\mathcal{I}_{n-1}(q;z)-\left(\mathcal{I}_{n}(q;z)\right)^{2}&=\mathcal{I}_{n}(q;z)\left[\frac{q^{\frac{n(n-1)}{2}}}{(q;q)_{n}}z^{n}-\frac{q^{\frac{n(n+1)}{2}}}{(q;q)_{n+1}}z^{n+1}\right]-\frac{q^{n^2}z^{2n+1}}{(q;q)_{n}(q;q)_{n+1}}\\
&=\frac{q^{\frac{n(n-1)}{2}}}{(q;q)_{n}}\sum_{j=n+2}^{\infty}\frac{q^{\frac{j(j-1)}{2}}}{(q;q)_{j}}z^{j+n}-\frac{q^{\frac{n(n+1)}{2}}}{(q;q)_{n+1}}\sum_{j=n+1}^{\infty}\frac{q^{\frac{j(j-1)}{2}}}{(q;q)_{j}}z^{j+n+1}\\
&=\frac{q^{\frac{n(n-1)}{2}}}{(q;q)_{n}}\sum_{j=n+2}^{\infty}\frac{q^{\frac{j(j-1)}{2}}}{(q;q)_{j}}z^{j+n}-\frac{q^{\frac{n(n+1)}{2}}}{(q;q)_{n+1}}\sum_{j=n+2}^{\infty}\frac{q^{\frac{(j-2)(j-1)}{2}}}{(q;q)_{j-1}}z^{j+n}\\
&=\sum_{j=n+2}^{\infty}\frac{q^{\frac{n(n-1)+(j-1)(j-2)}{2}}\left[q^{j-1}-q^{n}\right]}{(q;q)_{n+1}(q;q)_{j}}x^{j+n}<0.
\end{split}
\end{equation}
From (\ref{11}) we obtain the right side inequality of (\ref{10}) for all $q\in(0,1)$ and $x>0.$\\

The inequality 
\begin{equation}
\left[\frac{q-q^{n+2}}{1-q^{n+2}}\right]\left(\mathcal{I}_{n}(q;z)\right)^{2}\leq\mathcal{I}_{n-1}(q;z)\mathcal{I}_{n+1}(q;z)
\end{equation}
is equivalent to
\begin{equation}\label{tt02}
\left(\frac{1-q}{1-q^{n+2}}\right)\left(\mathcal{I}_{n}(q;z)\right)^{2}\geq\frac{q^{n^{2}}z^{2n+1}}{(q;q)_{n}(q;q)_{n+1}}+\mathcal{I}_{n}(q;z)\left(\frac{q^{\frac{n(n+1)}{2}}}{(q;q)_{n+1}}z^{n+1}-\frac{q^{\frac{n(n-1)}{2}}}{(q;q)_{n}}z^{n}\right).
\end{equation}
So
\begin{equation}\label{t01}
\frac{q^{n^{2}}z^{2n+1}}{(q;q)_{n}(q;q)_{n+1}}+\mathcal{I}_{n}(q;z)\left(\frac{q^{\frac{n(n+1)}{2}}}{(q;q)_{n+1}}z^{n+1}-\frac{q^{\frac{n(n-1)}{2}}}{(q;q)_{n}}z^{n}\right)=
\end{equation}
\begin{equation*}
\begin{split}
\;\;\;\;\;\;\;\;\;\;&=\sum_{k=n+1}^{\infty}\frac{q^{\frac{k(k-1)+n(n+1)}{2}}z^{k+n+1}}{(q;q)_{n+1}(q;q)_{k}}-\sum_{k=n+2}^{\infty}\frac{q^{\frac{k(k-1)+n(n-1)}{2}}z^{k+n}}{(q;q)_{n}(q;q)_{k}}\\
&=q^{\frac{n(n+1)}{2}}\sum_{k=0}^{\infty}\frac{q^{\frac{(n+k)(n+1+k)}{2}}}{(q;q)_{n+1}(q;q)_{n+1+k}}z^{2n+2+k}-q^{\frac{n(n-1)}{2}}\sum_{k=0}^{\infty}\frac{q^{\frac{(n+k+1)(n+2+k)}{2}}}{(q;q)_{n}(q;q)_{n+2+k}}z^{2n+2+k}\\
&=\sum_{k=0}^{\infty}\frac{q^{\frac{n(n-1)+(n+k)(n+k+1)}{2}}}{(q;q)_{n}(q;q)_{n+k+1}}\left[\frac{q^{n}}{1-q^{n+1}}-\frac{q^{n+k+1}}{1-q^{n+k+2}}\right]z^{2n+2+k}\\
&=\sum_{k=0}^{\infty}\frac{q^{\frac{n(n+1)+(n+k)(n+k+1)}{2}}}{(q;q)_{n+1}(q;q)_{n+k+2}}\left(1-q^{k+1}\right)z^{2n+2+k}\\
&=\sum_{k=0}^{\infty}\frac{q^{\frac{n(n+1)+(n+k)(n+k+1)}{2}}}{(q;q)_{n+1}(q;q)_{n+k+2}}\left(1-q^{k+1}\right)z^{2n+2+k}.
\end{split}
\end{equation*}
On the other hand, since 
\begin{equation*}
(n+j+1)(n+j)+(n+k-j+1)(n+k-j)\leq n(n+1)+(n+k)(n+k+1)
\end{equation*}
for all $n\in\mathbb{N}$ and $0\leq j\leq k$, we get 
\begin{equation}
\begin{split}
\left(\frac{1-q}{1-q^{n+2}}\right)\left(\mathcal{I}_{n}(q;z)\right)^{2}&=\left(\frac{1-q}{1-q^{n+2}}\right)\sum_{k=0}^{\infty}z^{2n+2+k}\sum_{j=0}^{k}\frac{q^{\frac{(n+j)(n+j+1)+(n+k-j)(n+k-j+1)}{2}}}{(q;q)_{n+1+j}(q;q)_{n+k-j+1}}\\
&\geq\left(\frac{1-q}{1-q^{n+2}}\right)\sum_{k=0}^{\infty}\frac{z^{2n+2+k}}{(q;q)_{n+1}(q;q)_{n+k+1}}\sum_{j=0}^{k}q^{\frac{(n+j)(n+j+1)+(n+k-j)(n+k-j+1)}{2}}\\
&\geq\left(\frac{1-q}{1-q^{n+2}}\right)\sum_{k=0}^{\infty}\frac{z^{2n+2+k}(k+1)q^{\frac{n(n+1)+(n+k)(n+k+1)}{2}}}{(q;q)_{n+1}(q;q)_{n+k+1}}\\
&\geq (1-q)\sum_{k=0}^{\infty}\frac{z^{2n+2+k}(k+1)q^{\frac{n(n+1)+(n+k)(n+k+1)}{2}}}{(q;q)_{n+1}(q;q)_{n+k+2}}
\end{split}
\end{equation}
Now, from the previous inequality , (\ref{w1}) and (\ref{t01}), we have 
\begin{equation}
\begin{split}
\left(\frac{1-q}{1-q^{n+2}}\right)\left(\mathcal{I}_{n}(q;z)\right)^{2}&\geq \sum_{k=0}^{\infty}\frac{q^{\frac{n(n+1)+(n+k)(n+k+1)}{2}}}{(q;q)_{n+1}(q;q)_{n+k+2}}(1-q^{k+1})z^{2n+2+k}\\
&=\frac{q^{n^{2}}z^{2n+1}}{(q;q)_{n}(q;q)_{n+1}}+\mathcal{I}_{n}(q;z)\left(\frac{q^{\frac{n(n+1)}{2}}}{(q;q)_{n+1}}z^{n+1}-\frac{q^{\frac{n(n-1)}{2}}}{(q;q)_{n}}z^{n}\right).
\end{split}
\end{equation}
So, the inequalitty (\ref{tt02}) holds, from which follows the left side inequality of (\ref{10}) for all $q\in(0,1)$ and $x>0.$\\
Furthermore,
\[\lim_{z\longrightarrow0}\frac{\mathcal{I}_{n-1}(q;z)\mathcal{I}_{n+1}(q;z)}{\left(\mathcal{I}_{n}(z;q)\right)^{2}}=\frac{q-q^{n+2}}{1-q^{n+2}}.\]
The proof of Theorem \ref{T2} is completed.
\end{proof}
\begin{remark} Observe also that if $q$ tends to $1$ in Theorem \ref{T2}, then we get the following result: If $n\in\mathbb{N}$ and $x>0$, then the Tur\'an type inequality:
\begin{equation*}
\frac{n+1}{n+2}I_n^2(x))<I_{n-1}(x)I_{n+1}(x)
\end{equation*}
is valid. We note that this inequality was proved by H. Alzer \cite{al}
\end{remark}

\end{document}